\begin{document}

\newtheorem{theorem}{Theorem}
\newtheorem{lemma}[theorem]{Lemma}
\newtheorem{algol}{Algorithm}
\newtheorem{cor}[theorem]{Corollary}
\newtheorem{prop}[theorem]{Proposition}

\newcommand{\comm}[1]{\marginpar{%
\vskip-\baselineskip 
\raggedright\footnotesize
\itshape\hrule\smallskip#1\par\smallskip\hrule}}

\def\cA{{\mathcal A}}
\def\cB{{\mathcal B}}
\def\cC{{\mathcal C}}
\def\cD{{\mathcal D}}
\def\cE{{\mathcal E}}
\def\cF{{\mathcal F}}
\def\cG{{\mathcal G}}
\def\cH{{\mathcal H}}
\def\cI{{\mathcal I}}
\def\cJ{{\mathcal J}}
\def\cK{{\mathcal K}}
\def\cL{{\mathcal L}}
\def\cM{{\mathcal M}}
\def\cN{{\mathcal N}}
\def\cO{{\mathcal O}}
\def\cP{{\mathcal P}}
\def\cQ{{\mathcal Q}}
\def\cR{{\mathcal R}}
\def\cS{{\mathcal S}}
\def\cT{{\mathcal T}}
\def\cU{{\mathcal U}}
\def\cV{{\mathcal V}}
\def\cW{{\mathcal W}}
\def\cX{{\mathcal X}}
\def\cY{{\mathcal Y}}
\def\cZ{{\mathcal Z}}

\def\C{\mathbb{C}}
\def\F{\mathbb{F}}
\def\K{\mathbb{K}}
\def\Z{\mathbb{Z}}
\def\R{\mathbb{R}}
\def\Q{\mathbb{Q}}
\def\N{\mathbb{N}}
\def\M{\textsf{M}}

\def\({\left(}
\def\){\right)}
\def\[{\left[}
\def\]{\right]}
\def\<{\langle}
\def\>{\rangle}

\def\e{e}

\def\eq{\e_q}
\def\fS{{\mathfrak S}}

\def\lcm{{\mathrm{lcm}}\,}

\def\fl#1{\left\lfloor#1\right\rfloor}
\def\rf#1{\left\lceil#1\right\rceil}
\def\mand{\qquad\mbox{and}\qquad}

\def\jt{\tilde\jmath}
\def\ellmax{\ell_{\rm max}}
\def\llog{\log\log}


\title{On the Product of Small Elkies Primes}

\author{
{\sc Igor E.~Shparlinski} \\
{Department of Computing}\\
{Macquarie University} \\
{Sydney, NSW 2109, Australia} \\
{\tt igor.shparlinski@mq.edu.au}
}

\maketitle

\begin{abstract} Given an elliptic curve $E$
over a finite field $\F_q$ of $q$ elements, we say that an odd prime 
$\ell \nmid q$ is an Elkies prime for $E$ if $t_E^2 - 4q$ is a quadratic residue 
modulo~$\ell$, where $t_E = q+1 - \#E(\F_q)$  and $\#E(\F_q)$ is the number of $\F_q$-rational
points on $E$. 
These primes are used in  the presently most efficient algorithm to compute $\#E(\F_q)$. 
In particular, the bound $L_q(E)$ such that the
product of all Elkies primes for $E$  up to $L_q(E)$  exceeds $4q^{1/2}$ is  
a crucial parameter of this algorithm.
We show that there are infinitely many pairs $(p, E)$ of 
primes $p$ and curves $E$ over $\F_p$ with   $L_p(E)  \ge c \log p \log \log \log p$
for some absolute constant $c>0$, while a naive heuristic estimate suggests that 
 $L_p(E) \sim \log p$. 
This complements recent 
results of Galbraith and Satoh (2002), conditional 
under the  Generalised Riemann Hypothesis, and of 
Shparlinski and Sutherland (2012), unconditional for 
almost all pairs $(p,E)$. 
\end{abstract}


\section{Introduction}

For an elliptic curve $E$ over  a finite field $\F_q$ 
of $q$ elements we denote by $\#E(\F_q)$  the number of
$\F_q$-rational points on $E$ and define the {\it trace of Frobenius\/}
$t_E = q+1 - \#E(\F_q)$; we refer to~\cite{ACDFLNV,Silv} for a background on 
elliptic curves.
We say that an odd prime 
$\ell \nmid q$ is an  {\it Elkies prime\/} for $E$ if $t_E^2 - 4q$ is a quadratic residue
modulo $\ell$; otherwise $\ell\nmid q$ is called an {\it  Atkin prime\/}.

These primes play a key role in the {\it Schoof-Elkies-Atkin (SEA) algorithm\/}, 
see~\cite[Sections~17.2.2 and~17.2.5]{ACDFLNV}, and their distribution affects the performance of this
algorithm in a rather dramatic way. Thus, for 
an elliptic curve $E$ over $\F_q$, we define
$N_a(E;L)$ and $N_e(E;L)$ as the numbers of Atkin and Elkies primes 
$\ell  \in [1,L]$, respectively. 
Obviously,
$$
N_a(E;L) + N_e(E;L) =  \pi(L) + O\(1\),
$$
where $\pi(L)$ denotes the number of primes $\ell < L$.
Furthermore, for any elliptic curve over a finite field, one expects about the same number of  
Atkin and Elkies primes $\ell < L$ as $L\to \infty$. That is,  naive heuristic suggests 
that 
\begin{equation}
\label{eq:NaNe}
N_a(E;L) \sim N_e(E;L)  \sim   \frac12  \pi(L),
\end{equation}
as $L\to \infty$. 

It has been noted by Galbraith and Satoh~\cite[Appendix~A]{Sat}, that under the Generalised Riemann Hypothesis (GRH), using the bound 
on sums of quadratic characters over primes, 
one derives that~\eqref{eq:NaNe}
holds for $L \ge (\log q)^{2 + \varepsilon}$ for any fixed $\varepsilon>0$
and a sufficiently large $q$.   

The unconditional results are much weaker 
and essentially rely on our knowledge of the distribution of 
primes in arithmetic progressions; see~\cite[Section~5.9]{IwKow}
or~\cite[Chapters~4 and~11]{MonVau}. However, for almost all pairs
$(p, E)$ of  primes $p$ and elliptic curves $E$ over $\F_p$, 
Shparlinski and Sutherland~\cite{ShpSuth} have 
established the asymtotic formula~\eqref{eq:NaNe} for $L \ge (\log p)^{\varepsilon}$ 
for any fixed $\varepsilon>0$, that is, starting from much smaller 
values of $L$ that those implied by the GRH. 
In particular, Let $\cL_E(p)$ be the set all   Elkies primes for  an elliptic curve $E$
over $\F_p$.  
We see that the prime number theorem
and the result of~\cite{ShpSuth}  implies 
that for some function $L(p) \sim \log p$
for almost all pairs $(p, E)$ we have
\begin{equation}
\label{eq:Prod}
\prod_{\substack{\ell \in \cL_E(p)\\ 3 \le \ell \le L(p)}} \ell > 4p^{1/2}.
\end{equation}
 Note that this condition is crucial for the SEA
point counting algorithm,  see~\cite[Sections~17.2.2 and~17.2.5]{ACDFLNV}.

Here we show that this ``almost all'' result cannot be extended for 
all primes and curves even for a slightly larger values of $L(p)$. 
More precisely, we show that there is an absolute constant $c>0$ such 
that for any function 
$L(p) \le c \log p \log \log \log p$
the inequality~\eqref{eq:Prod} fails in a very strong sense for infinitely many pairs  $(p, E)$.

\begin{theorem} 
\label{thm:Low Elkies} There is a constant $c>0$ so that for infinitely 
many pairs $(p, E)$ of  primes $p$ and curves $E$ over $\F_p$, and $L \le c \log p \log \log \log p$ we have
$$
\prod_{\substack{\ell \in \cL_E(p)\\ 3 \le \ell \le L}} \ell = p^{o(1)}. 
$$
\end{theorem}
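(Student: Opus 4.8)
The plan is to arrange that every odd prime $\ell\le L$ is an Atkin prime for $E$, so that $\cL_E(p)\cap[3,L]=\emptyset$ and the product in the statement is the empty product $1=p^{o(1)}$; the slack in the bound $p^{o(1)}$ then lets me discard a negligible set of exceptional $\ell$ if needed. By Deuring's theorem it suffices to produce a prime $p$ and an integer $t$ with $|t|<2\sqrt{p}$ and $\gcd(t,p)=1$ for which the Frobenius discriminant $D:=t^2-4p<0$ is a quadratic non-residue modulo every prime $\ell\in[3,L]$: any ordinary curve $E/\F_p$ with trace $t$ then has $t_E^2-4p=D$, and every such $\ell$ is Atkin.

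The first step is to fix the square class of $D$. I would choose a negative fundamental discriminant $-d$ with $\left(\frac{-d}{\ell}\right)=-1$ for all odd $\ell\le L$. This is one congruence condition modulo each $\ell$, so by the Chinese Remainder Theorem such $-d$ exists; moreover the admissible residues have density $2^{-\pi(L)}$ modulo the primorial $\prod_{3\le\ell\le L}\ell$, and a counting argument shows one may take $\log d$ of order $\pi(L)\asymp L/\log L$. Crucially, $\left(\frac{-d}{\ell}\right)=-1$ forces $\ell\nmid d$, so $d$ has no prime factor $\le L$; this is precisely what keeps every small prime unramified in $\Q(\sqrt{-d})$ and hence prevents it from becoming a spurious Elkies prime.

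The second step is to realise $-d$ as an actual Frobenius discriminant. If $p$ is a prime represented by the principal form of discriminant $-d$, i.e.\ $4p=t^2+d$ for some $t$, then $p=\pi\bar\pi$ with $\pi=(t+\sqrt{-d})/2\in\cO_K$, whence $t^2-4p=-d$ exactly and $\left(\frac{t^2-4p}{\ell}\right)=\left(\frac{-d}{\ell}\right)=-1$ for every odd $\ell\le L$. The conditions $\gcd(t,p)=1$ and $|t|<2\sqrt{p}$ hold automatically once $p\approx d/4$ with $t$ small, and the desired relation between $L$ and $p$ would follow by combining $\log d\asymp L/\log L$ with a bound for the size of the smallest such prime in terms of $d$: a bound of the shape $\log p\ll\log d$ would give $L\gg\log p\log\log p$, and any weaker effective estimate degrades this to the stated $c\log p\log\log\log p$.

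The main obstacle is exactly this last step: producing, effectively and unconditionally, a small prime $p$ with $4p=t^2+d$, equivalently a prime represented by the principal form of discriminant $-d$. Naive effective Chebotarev applied to the Hilbert class field is useless here, since its degree $2h(-d)$ is far too large, so I expect instead to average over the discriminant (letting $d$ range over the non-residue-rich values in a dyadic window, and $t$ over a short range) and to detect the prime by a sieve, expanding the product of the local conditions into incomplete Jacobi-symbol sums in $t$ and $p$. The level of distribution available for these character sums — via Pólya–Vinogradov or Burgess together with the large sieve, while avoiding Siegel zeros — is what caps the admissible $L$, and this limitation is the source of the iterated-logarithm factor in the final bound. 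I would finally check that, if one is forced to work with $t^2-4p=-dm^2$ for a small conductor $m$ rather than $m=1$, the extra ramified primes $\ell\mid m$ contribute a factor $p^{o(1)}$, which is guaranteed by the roughness of $d$ and by keeping $m$ free of prime factors $\le L$.
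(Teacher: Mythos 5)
Your strategy is essentially the Galbraith--Satoh argument that the paper explicitly singles out as \emph{not} currently completable: fix a discriminant $-d$ that is a non-residue modulo every small prime, then realise it as $t^2-4p$ by finding a prime represented by the principal form, i.e.\ a prime $p$ with $4p=t^2+d$. You correctly identify this last step as the main obstacle, but the workaround you sketch does not close it. The set of admissible $d$ (those with $\left(\frac{-d}{\ell}\right)=-1$ for all $\ell\le L$) has relative density $2^{-\pi(L)}$; since you need $\log d\asymp\pi(L)$, this is a set of size $D^{1-\theta}$ for a fixed $\theta>0$ in a window $[D,2D]$ --- a genuine power saving. The known ``primes in quadratic progressions on average'' results (Baier--Zhao, L{\"u}--Sun, cited in the paper) only control all $d$ outside an exceptional set of size $D(\log D)^{-A}$, which swamps your admissible set entirely; no amount of P\'olya--Vinogradov/Burgess plus large sieve is known to detect a prime value of $(t^2+d)/4$ for a \emph{prescribed} $d$, or even for some $d$ in a set this sparse, unconditionally. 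There is likewise no unconditional effective bound of the shape $\log p\ll\log d$ for the least prime represented by the principal form of discriminant $-d$ (effective Chebotarev in the Hilbert class field, of degree $2h(-d)=d^{1/2+o(1)}$, is hopeless here, as you note). So the step on which your whole construction rests is an open problem, and your final accounting of where the factor $\log\log\log p$ comes from is not tied to any actual estimate.

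The paper's proof avoids fixing the square class of $t^2-4p$ altogether. It runs a double sum over $t\le T=\lfloor Q^{1/2}\rfloor$ and primes $p\in[Q/2,Q]$ of the sieve weight $\prod_{\ell\in[M,L]}\bigl(1+\bigl(\frac{t^2-4p}{\ell}\bigr)\bigr)$ and shows it is positive, using Lemma~\ref{lem:Long} for the small square-free moduli $m$ and Lemma~\ref{lem:Short} for the large ones. Two structural choices there are worth absorbing, because your plan is missing both. First, the paper does \emph{not} make every $\ell\le L$ Atkin: it only forces $\bigl(\frac{t^2-4p}{\ell}\bigr)\ne 1$ for $\ell$ in the window $[M,L]$ with $M=\lfloor\log Q(\log\log\log Q)^{-1}\rfloor$, and absorbs all primes below $M$ into $p^{o(1)}$ via the prime number theorem; this relaxation is essential, since $\prod_{3\le\ell\le L}(1+1/\ell)\asymp\log\log L$ would destroy the main-term comparison, whereas over $[M,L]$ this product is $1+o(1)$ by Mertens. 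Second, the exponent $\log\log\log p$ is dictated by balancing $2^{\pi(L)}$ against the saving $T^{1/24r2^r}$ from the short character sum bound with $r=\lfloor\log\log\log Q\rfloor$, not by any property of class groups or least primes. As written, your proposal has a genuine gap at its central step and would need to be replaced by an averaging argument over $(t,p)$ of this kind.
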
 

We note that Galbraith and Satoh~\cite[Appendix~A]{Sat} have conjectured  
and actually presented some arguments supporting a result of this kind. 
Moreover, under both the GRH  and 
the conjecture that every positive integer $n \equiv 1 \pmod 4$ can be represented as $n = 4p-t^2$ the argument of Galbraith and Satoh~\cite[Appendix~A]{Sat}
can be made rigorous and in fact under these assumptions it allows to replace 
$\log p \log \log \log p$ with $\log p  \log \log p$ in Theorem~\ref{thm:Low Elkies}.
Unfortunately, presently the required representation $n = 4p-t^2$  is known to
exist only for almost all $n$ (see~\cite{BaZh,LuSu}), 
which is not enough to complete the argument (even under the GRH).

\section{Preparations}
\label{sec:prep}

We recall
the notations $U = O(V)$, $V = \Omega(U)$, $U \ll V$ and  $V \gg U$, which are all
equivalent to the statement that the inequality $|U| \le c\,V$ holds asymptotically,
with some constant $c> 0$.  

We always assume  that $\ell$ and $p$ run through the prime values. 

For integers $a$ and $m \ge 2$, 
we use $(a/m)$ to denote a Jacobi symbol of $a$ modulo $m$, see~\cite[Section~3.5]{IwKow}.
We also use $\tau(k)$ and $\mu(k)$  to denote 
the number of integer positive divisors and the M{\"o}bius function of $k\ge 1$.
It is easy to see that for a square-free $k$ we have 
$$
\tau(k) = 2^{\omega(k)}
$$
where $\omega(k)$ is the number of prime divisors of $k$.  

Our main tools are bounds of multiplicative character sums.

The following estimate is a slight 
generalisation of~\cite[Lemma~2.2]{LuSh} and is also given in~\cite{ShpSuth}.

\begin{lemma} 
\label{lem:Long}  For any integers $a$ and $T\ge 1$  and a product 
$m = \ell_1 \ldots \ell_s$ 
of $s\ge 0$ distinct odd primes $\ell_1, \ldots,\ell_s$ 
with $\gcd(a,m)=1$ we have
$$
\sum_{|t|\le T} \(\frac{t^2 - a}{m}\) \ll T/m + C^s m^{1/2} \log m,
$$
for some absolute constant $C \ge 1$.
\end{lemma}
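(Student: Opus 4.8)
The plan is to exploit that the summand $\(\frac{t^2-a}{m}\)$ is a function of $t$ of period $m$, and to reduce the incomplete range $|t|\le T$ to complete sums over a full period by the classical completing (P\'olya) technique. Writing $\e_m(x)=\exp(2\pi i x/m)$ and $I=\{t:|t|\le T\}$, I would expand the summand in additive characters modulo $m$ to obtain
$$
\sum_{|t|\le T}\(\frac{t^2-a}{m}\)=\frac1m\sum_{h\bmod m}\widehat f(h)\sum_{t\in I}\e_m(ht),\qquad \widehat f(h)=\sum_{x\bmod m}\(\frac{x^2-a}{m}\)\e_m(-hx).
$$
Since the geometric sums satisfy $\bigl|\sum_{t\in I}\e_m(ht)\bigr|\ll\min(T,m/|h|)$, the entire estimate reduces to two inputs: the value of the complete sum (the frequency $h=0$) and a uniform bound for the twisted complete sums $\widehat f(h)$.

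For both inputs I would use multiplicativity of the Jacobi symbol together with the Chinese Remainder Theorem: as $m=\ell_1\cdots\ell_s$ with distinct odd primes and $x\bmod m$ corresponds to $(x\bmod\ell_1,\dots,x\bmod\ell_s)$, each complete sum factors into local sums modulo the $\ell_i$. For $h=0$ the local factor is the Jacobsthal-type sum $\sum_{t\bmod\ell}\(\frac{t^2-a}{\ell}\)$, which equals $-1$ whenever $\ell\nmid a$ — here the discriminant $4a$ of $t^2-a$ is nonzero modulo the odd prime $\ell$ precisely because $\gcd(a,m)=1$. Hence the complete sum equals $(-1)^s$, and the frequency $h=0$ contributes $O(T/m)$, which is the first term of the claimed bound.

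For $h\ne0$ the local factor is the mixed character sum $W(\ell,k)=\sum_{t\bmod\ell}\(\frac{t^2-a}{\ell}\)\e_\ell(kt)$. As $t^2-a$ is square-free as a polynomial over $\F_\ell$, Weil's bound for a multiplicative character twisted by an additive character gives $|W(\ell,k)|\le C_0\,\ell^{1/2}$ for an absolute constant $C_0\ge1$, uniformly in $k$ — the case $k\equiv0$ being covered trivially since then $W(\ell,0)=-1$ and $\ell\ge3$. Taking the product of the $s$ local factors across the CRT factorization yields the uniform estimate $|\widehat f(h)|\le C_0^{\,s}m^{1/2}$ for every $h$. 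Inserting this into the completing formula, the nonzero frequencies contribute
$$
\ll\frac1m\,C_0^{\,s}m^{1/2}\sum_{1\le|h|\le m/2}\frac{m}{|h|}\ll C_0^{\,s}m^{1/2}\log m,
$$
which is the second term with $C=C_0$.

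The main obstacle, and the heart of the argument, is securing this \emph{uniform} Weil bound with a constant $C_0$ independent of $\ell$, $k$, and $a$; this is exactly where the hypothesis $\gcd(a,m)=1$ is essential, as it prevents $t^2-a$ from degenerating modulo any $\ell_i$ and forces the expected square-root cancellation. The constant then propagates through the product over the $s$ primes, producing the factor $C^s$ in the statement. Finally, the degenerate case $s=0$, where $m=1$, is immediate since the sum is $2T+1\ll T$.
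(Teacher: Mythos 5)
Your argument is correct: the P\'olya completion to additive frequencies, the evaluation $\hat f(0)=(-1)^s$ via the local sums $\sum_{t\bmod\ell}\(\frac{t^2-a}{\ell}\)=-1$ (valid since $\gcd(a,m)=1$), and the uniform Weil bound $|W(\ell,k)|\le C_0\ell^{1/2}$ multiplied over the CRT factorisation give exactly the stated estimate. The paper itself gives no proof, deferring to~\cite[Lemma~2.2]{LuSh} and~\cite{ShpSuth}, and the proof there is essentially this same completion-plus-Weil argument, so your route matches the intended one.
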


We also need a slight extension of~\cite[Corollary~12.14]{IwKow}.
In fact, we present it in much wider generality and strength than 
is needed for our purpose. 
First we note that for a square-free integer $m$ and any integers 
$u$ and $v$, we have 
\begin{equation}
\label{eq:gcd}
\gcd((u-v)^2,m)= \gcd(u-v,m).
\end{equation}
Hence, in the case of quadratic polynomials, the bound of~\cite[Theorem~12.10]{IwKow}, implies 
the following results"

\begin{lemma} 
\label{lem:Short}  Assume that a square-free odd integer  $m\ge 3$ and 
an arbitrary integer $N\ge 1$ are such that 
all prime factors of $m$  are at most $N^{1/9}$. Then for 
any two integers $u, v$
we have 
$$
 \left| \sum_{n=1}^N \(\frac{(n-u)(n-v)}{m}\)\right| 
\le 4 N \(\gcd(u-v,m)   m^{-1}\tau(m)^{r^2+2r}\)^{1/r2^r} , 
$$
where $r$ is any positive integer with $N^r > m^3$.
\end{lemma}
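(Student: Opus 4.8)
The plan is to deduce the estimate directly from the Burgess-type bound \cite[Theorem~12.10]{IwKow} for an incomplete character sum $\sum_{n\le N}(f(n)/m)$ with $f\in\Z[X]$ of degree $d$ and $m$ squarefree, applied to the quadratic polynomial $f(X)=(X-u)(X-v)$, so that $d=2$; indeed the asserted inequality is exactly the squarefree-modulus extension of \cite[Corollary~12.14]{IwKow}. The two hypotheses under which that theorem operates — that every prime factor of $m$ be suitably small (specialising, in the quadratic case, to the assumed bound $N^{1/9}$) and that $r$ satisfy a lower bound forcing the sum to be long relative to $m$ (specialising to $N^{r}>m^{3}$) — are precisely the two conditions imposed in the statement, so they hold verbatim. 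The bound furnished by Theorem~12.10 carries a saving factor built from $\gcd(\mathrm{disc}(f),m)$, that is, from the product of the primes $\ell\mid m$ modulo which $f$ acquires a repeated root.

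Next I would simplify this discriminant factor. Since $f(X)=X^{2}-(u+v)X+uv$ has discriminant $(u+v)^{2}-4uv=(u-v)^{2}$, the relevant quantity is $\gcd((u-v)^{2},m)$, and because $m$ is squarefree the identity~\eqref{eq:gcd} collapses this to $\gcd(u-v,m)$. Inserting $\gcd(u-v,m)$ for the discriminant factor, together with the degree-$2$ values of the constants in Theorem~12.10 (the leading factor $4$, the exponent $r^{2}+2r$ on $\tau(m)$, and the exponent $1/(r2^{r})$), yields exactly the stated inequality. One checks that the gcd correctly captures the degenerate regimes: if $u\equiv v\pmod{\ell}$ for some $\ell\mid m$ then $f\equiv(X-u)^{2}\pmod\ell$, so $\left((n-u)^{2}/\ell\right)=1$ for every $n\not\equiv u$ and no cancellation is available at $\ell$; in the extreme case $u=v$ one has $\gcd(u-v,m)=m$ and the bound degrades to $4N\tau(m)^{(r+2)/2^{r}}$, which merely exceeds the trivial estimate $N$ for the sum.

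The only genuinely nontrivial content lies inside Theorem~12.10, and that is where the main obstacle would reside were one to reprove rather than cite, and two points deserve emphasis. First, because $f$ need not be separable modulo every prime factor of $m$, one must split $m$ through the Chinese Remainder Theorem and the multiplicativity of the Jacobi symbol, reserving Weil's bound (which for a squarefree quadratic gives cancellation $O(\ell^{1/2})$ at a single prime $\ell$) for the good primes $\ell\nmid(u-v)$ while allowing the bad primes $\ell\mid\gcd(u-v,m)$ to contribute their full size; this is exactly what produces the factor $\gcd(u-v,m)$. Second, the $2^{r}$ in the exponent reflects the inductive shape of the argument: one completes the incomplete sum and iterates $r$ rounds of the Cauchy--Schwarz inequality, each round squaring the effective moment and picking up a further power of $\tau(m)$ from the completion step, which is what assembles the exponent $r^{2}+2r$. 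Matching these hypotheses and the degree-$2$ constants is then the whole of the work, and no estimate beyond~\eqref{eq:gcd} and Theorem~12.10 is required.
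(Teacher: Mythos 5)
Your proposal correctly identifies the two ingredients the paper cites --- the identity~\eqref{eq:gcd} reducing $\gcd((u-v)^2,m)$ to $\gcd(u-v,m)$, and the composite-modulus bound of~\cite[Theorem~12.10]{IwKow} --- but it misattributes the shape of the final estimate to Theorem~12.10 itself, and in doing so skips the step that is the actual content of the paper's proof. Theorem~12.10 does not bound the sum by a power of $m^{-1}$ with exponent $1/(r2^r)$; it yields a bound measured against a single divisor $m_j$ of $m$ of size at most $N^{4/9}$, with the larger exponent $1/2^r$, namely $4N\bigl(\gcd(u-v,m_j)m_j^{-1}\tau(m_j)^{r^2+2r}\bigr)^{1/2^r}$. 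To reach the stated inequality one must, as in the proof of~\cite[Corollary~12.14]{IwKow}, first produce a factorisation $m=m_1\cdots m_r$ with every $m_j\le N^{4/9}$ --- this is exactly where the hypotheses that all prime factors of $m$ are at most $N^{1/9}$ and that $N^r>m^3$ are consumed, via a greedy packing of the prime factors into $r$ bins, rather than being, as you assert, ``precisely the two conditions'' under which Theorem~12.10 operates --- and then observe that, since the $m_j$ are pairwise coprime, the quantity $\gcd(u-v,d)d^{-1}\tau(d)^{r^2+2r}$ is multiplicative in $d$ over these factors, so its product over $j$ equals the corresponding quantity for $m$, whence for some $j$ one has $\gcd(u-v,m_j)m_j^{-1}\tau(m_j)^{r^2+2r}\le\bigl(\gcd(u-v,m)m^{-1}\tau(m)^{r^2+2r}\bigr)^{1/r}$. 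Applying Theorem~12.10 with that particular $m_j$ is what converts the exponent $1/2^r$ into the stated $1/(r2^r)$.

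Without this factorisation-and-pigeonhole argument your derivation does not close: ``inserting the degree-$2$ constants'' into Theorem~12.10 produces a saving against a small divisor of $m$, not against $m$ itself, and in your reading the parameter $r$ has no mechanism for entering the denominator of the exponent. The discriminant discussion and the heuristic sketch of the interior of Theorem~12.10 are fine as far as they go, but the lemma is not a verbatim specialisation of that theorem; it is a short but genuine deduction from it, and that deduction is what is missing from your write-up.
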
 

\begin{proof} As in the proof of~\cite[Corollary~12.14]{IwKow}, 
we note that there is a factorisation
$$
m= m_1\ldots m_r
$$
with $m_j \le N^{4/9}$, $j =1, \ldots, r$.
In particular, by~\cite[Theorem~12.10]{IwKow}, recalling~\eqref{eq:gcd}, we see that for any 
$j =1, \ldots, r$ we have
$$
 \left| \sum_{n=1}^N \(\frac{(n-u)(n-v)}{m}\)\right| 
\le  4 N \(\gcd(u-v,m_j)  m_j^{-1}\tau(m_j)^{r^2+2r}\)^{1/2^r}.
$$
Since $m$ is square-free, we see that 
$m_1,\ldots, m_r$ are relatively prime.
Using the multiplicativity the divisor function, we 
obtain
$$
\prod_{j=1}^r \gcd(u-v,m_j)  m_j^{-1}\tau(m_j)^{r^2+2r} 
= \gcd(u-v,m)  m^{-1}\tau(m)^{r^2+2r}. 
$$
Therefore, for some    $j \in \{1, \ldots, r\}$ we have
$$
\gcd(u-v,m_j)  m_j^{-1}\tau(m_j)^{r^2+2r} 
\le  \(\gcd(u-v,m)  m^{-1}\tau(m)^{r^2+2r}\)^{1/r}
$$ 
and the result now follows. 
\end{proof}

We remark that several more stronger and more general results of this type have 
recently been given by Chang~\cite{Chang}.

Furthermore, we also recall the  following classical
results of Deuring~\cite{Deur}.

\begin{lemma} 
\label{lem:t p} For any prime $p$ and an integer $t$ with $|t| \le 2q^{1/2}$, there is  a curve 
$E$ over $\F_p$ with $\#E(\F_p) = p + 1 -t$.
\end{lemma}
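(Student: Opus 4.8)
The plan is to exhibit $E$ as the reduction modulo $p$ of an elliptic curve with complex multiplication whose Frobenius has the prescribed trace. Since $p$ is prime, $t^2=4p$ cannot occur, so the hypothesis $|t|\le 2p^{1/2}$ forces $D:=t^2-4p<0$, and $D\equiv t^2\equiv 0,1\pmod 4$. I would split into the ordinary range $p\nmid t$, which carries the whole argument, and the boundary case $p\mid t$. In the latter, $p\mid t$ together with $0<|t|\le 2p^{1/2}$ would give $p\le 2p^{1/2}$, which fails for $p\ge 5$; hence for $p\ge5$ the case $p\mid t$ reduces to $t=0$, while the finitely many pairs with $p\in\{2,3\}$ can be settled by listing curves directly.

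For the ordinary case set $K=\Q(\sqrt D)$ and let $\cO$ be the quadratic order of discriminant $D$, so that $\pi:=(t+\sqrt D)/2\in\cO$ satisfies $\pi^2-t\pi+p=0$; thus $\pi$ has norm $\pi\bar\pi=(t^2-D)/4=p$ and trace $\pi+\bar\pi=t$. The first key point is that $p$ is coprime to the conductor $f$ of $\cO$ (else $p^2\mid D$ would force $p\mid t$) and that $p$ splits in $\cO$, since $(D/p)=(t^2/p)=1$. Because $\pi$ has norm exactly $p$, the prime $\mathfrak p=(\pi)$ above $p$ is \emph{principal}. By the theory of complex multiplication a curve $\tilde E$ with $\operatorname{End}\tilde E\cong\cO$ is defined over the ring class field $H_\cO$, and a principal prime such as $\mathfrak p$ splits completely in $H_\cO$; consequently the reduction of $\tilde E$ at a prime above $\mathfrak p$ is defined over the residue field $\F_p$. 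Equivalently, the class polynomial attached to $\cO$ acquires a root $j_0\in\F_p$ modulo $p$, and I take $E/\F_p$ with $j(E)=j_0$.

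The second key point, and the main obstacle, is to identify the Frobenius of $E$. This is Deuring's reduction theorem: the reduction $E$ is ordinary, the reduction map induces $\operatorname{End}(E)\cong\cO$, and under this isomorphism the $p$-power Frobenius endomorphism corresponds to a norm-$p$ generator of $\mathfrak p$ or $\bar{\mathfrak p}$. Since $\cO^\times=\{\pm1\}$ for $D<-4$, such a generator is one of $\pm\pi,\pm\bar\pi$, whose traces are $\pm t$; hence $t_E=\pm t$. (The two discriminants $D\in\{-3,-4\}$ with larger unit groups arise for only finitely many pairs $(p,t)$ and are disposed of by hand.) The heart of the matter is therefore the full apparatus of complex multiplication --- potential good reduction of CM curves together with the Main Theorem of CM identifying Frobenius with $\pi$ --- which is exactly the input I would import rather than reprove.

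Finally I would fix the sign. If $t_E=-t$, I replace $E$ by its quadratic twist $E^{(d)}$ by a non-square $d\in\F_p^\times$ (here $p>2$), which satisfies $t_{E^{(d)}}=-t_E=t$; this step is vacuous when $t=0$. The remaining supersingular case $t=0$, that is $\#E(\F_p)=p+1$, is handled by producing a supersingular curve over $\F_p$: for $p\equiv3\pmod4$ the curve $y^2=x^3-x$ already has $t_E=0$, and in general any supersingular $E/\F_p$ works, since $p\mid t_E$ and $|t_E|\le 2p^{1/2}$ force $t_E=0$ once $p>3$. Together with the explicit verification for $p\in\{2,3\}$, this would realise every $t$ in the Hasse interval.
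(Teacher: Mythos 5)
The paper itself gives no proof of this lemma: it is quoted as a classical result of Deuring with only the citation to his 1941 paper. Your sketch reconstructs the standard complex-multiplication proof of that theorem (reduction to the ordinary case $p\nmid t$, the order $\cO$ of discriminant $D=t^2-4p$, the principal prime $(\pi)$ splitting completely in the ring class field, Deuring's reduction theorem identifying Frobenius with a unit multiple of $\pi$, and a quadratic twist to fix the sign), and that skeleton is correct and is in substance Deuring's own argument. However, two specific steps are defective as written.

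First, your disposal of the discriminants $D\in\{-3,-4\}$ rests on a false claim: these do \emph{not} arise for only finitely many pairs $(p,t)$. Indeed $D=-4$ holds whenever $p=s^2+1$ and $t=2s$ (so for $(p,t)=(5,4),(17,8),(37,12),\dots$), and $D=-3$ whenever $4p=t^2+3$ (so for $(7,5),(13,7),(31,11),\dots$); conjecturally both families are infinite, and certainly no finiteness statement is provable, so these cases cannot be ``disposed of by hand.'' The repair is standard and stays within your framework: when $j=0$ (resp.\ $j=1728$) the sextic (resp.\ quartic) twists of the reduced curve have Frobenius $u\pi$ as $u$ ranges over $\cO^\times$, so the trace $t=\pi+\bar\pi$ is realised by one of the twists rather than merely $\pm t$. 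Second, in the supersingular case $t=0$ you exhibit a curve only for $p\equiv 3\pmod 4$; the phrase ``any supersingular $E/\F_p$ works'' presupposes the existence of a supersingular curve \emph{defined over} $\F_p$ (not merely over $\F_{p^2}$), which is not automatic when $p\equiv 1\pmod 4$ and is not supplied by $y^2=x^3-x$, which is then ordinary. This existence is again classical but needs an argument, e.g.: choose a prime $q\equiv 3\pmod 4$ with $(-q/p)=-1$ (possible by Dirichlet), note that the class number $h(-q)$ is odd by genus theory, so the Hilbert class polynomial of discriminant $-q$, whose roots modulo $p$ are supersingular $j$-invariants lying in $\F_{p^2}$, must have an irreducible factor of degree $1$ over $\F_p$; the resulting curve over $\F_p$ satisfies $p\mid t_E$ and $|t_E|\le 2p^{1/2}$, hence $t_E=0$ for $p\ge 5$. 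With these two patches your proof is complete.
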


\section{Proof of Theorem~\ref{thm:Low Elkies}}

Let $Q$ be a sufficiently large integer. We then set
$$
L  = \fl{0.3 \log Q\log \log \log Q}, \quad M  = \fl{\log Q\(\log \log \log Q\)^{-1}}, \quad 
T = \fl{Q^{1/2}}.
$$

Since, by the prime number theorem
$$
\prod_{\ell \in \le M} \ell = Q^{o(1)},
$$
we see from Lemma~\ref{lem:t p} that it is enough to 
show that  for 
any sufficiently large $Q$,  there is an integer 
$t \in [1, T]$ and a prime $p \in [Q/2, Q]$ such that 
\begin{equation}
\label{eq:Cond 1}
 \(\frac{t^2 - 4p}{\ell}\) \ne 1
\end{equation}
for all primes  $\ell \in  [M,L]$.

Clearly, if the condition~\eqref{eq:Cond 1} is violated, then 
$$
\prod_{\ell \in  [M,L]}  \(1 - \(\frac{t^2 - 4p}{\ell}\) \) = 0.
$$
Thus it is enough to show that the sum
$$
W = \sum_{1 \le t\le T} \, \sum_{Q/2 \le p \le Q}\, \prod_{\ell \in  [M,L]} \(1 + \(\frac{t^2 - 4p}{\ell}\) \) 
$$
is positive, that is, that
\begin{equation}
\label{eq:W pos}
W > 0
\end{equation}
for the above choice of $L$, $M$ and $T$, 
provided that $Q$ is  sufficiently large.

Let $\cM$ be the set of $2^{\pi(L) - \pi(M)}$ square-free products (including the empty product) 
composed out of primes $\ell \in [M,L]$,
and let $\cM^* = \cM\setminus\{1\}$.
We have
$$
W   =    \sum_{1 \le t\le T}\, \sum_{Q/2 \le p \le Q} \mu(m)  \sum_{m\in \cM} \(\frac{t^2 - 4p}{m}\) . 
$$
Changing the order of summation and separating the term $T (\pi(Q)-\pi(Q/2))$
corresponding to $m = 1$, we derive
\begin{equation}
\label{eq:W Sm}
W = T (\pi(Q)-\pi(Q/2)) + \sum_{m \in \cM^*}  \mu(m) S(m)
\end{equation}
where 
$$
S(m) =  \sum_{1 \le t\le T} \,\sum_{Q/2 \le p \le Q}  \(\frac{t^2 - 4p}{m}\) .
$$
Thus
$$
|S(m)| \le  \sum_{Q/2 \le p \le Q} \left| \sum_{1 \le t\le T}  \(\frac{t^2 - 4p}{m}\)\right|.
$$

For $m \le T^{1/4}$ we use Lemma~\ref{lem:Long} and note that 
$$
C^{\omega(m)} = \tau(m)^{\log C/\log 2}= m^{o(1)},
$$ 
so we obtain 
$$
S(m)  \ll \pi(Q)\(T/m + C^s m^{1/2} \log m\) \ll \pi(Q) T/m.
$$
Thus for the contribution from all such sums we derive
\begin{equation}
\label{eq:Small m}
\sum_{\substack{m\in \cM^*\\m \le T^{1/4}}}  |S(m)| \ll 
\pi(Q) T \sum_{\substack{m\in \cM^*\\m \le T^{1/4}}} 1/m
\ll \pi(Q) T \( \prod_{\ell \in  [M,L]} \(1 + \frac{1}{\ell}\) - 1\).
\end{equation}
Furthermore
$$
\log \prod_{\ell \in  [M,L]} \(1 + \frac{1}{\ell}\)   = 
\sum_{\ell \in  [M,L]} \log \(1 + \frac{1}{\ell}\) 
\ll \sum_{\ell \in  [M,L]}   \frac{1}{\ell} .
$$
By the Mertens theorem,  see~\cite[Equation~(2.15)]{IwKow}, 
\begin{equation*}
\begin{split}
\sum_{\ell \in  [M,L]}   \frac{1}{\ell}
&= \log \frac{\log  L}{\log  M} + O(1/\log M) \\
&=  \log \frac{\log \log Q  + \log \log \log \log Q + \log 0.3}
{\log \log Q  - \log \log \log \log Q} + O(1/\log M) \\
&= \log\(1 + O\( \frac{\log \log \log \log Q}{\log \log Q}\)\)  + O(1/\log M)\\
&\ll  \frac{\log \log \log \log Q}{\log \log Q}.
\end{split}
\end{equation*}
Therefore 
$$
 \prod_{\ell \in  [M,L]} \(1 + \frac{1}{\ell}\) 
  = 1 +O\( \frac{\log \log \log \log Q}{\log \log Q}\).
$$ 
Inserting this bound in~\eqref{eq:Small m}, we
obtain 
\begin{equation}
\label{eq:small m}
\begin{split}
\sum_{\substack{m\in \cM^*\\m \le T^{1/4}}}  |S(m)|  \ll \pi(Q) T  \frac{\log \log \log \log Q}{\log \log Q}
= o(\pi(Q) T).
\end{split}
\end{equation}

To estimate the sums $S(m)$ for $m > T^{1/4}$, using the Cauchy inequality and then extending 
the summation range over all integers $n \le 4Q$, we derive
\begin{equation*}
\begin{split}
|S(m)|^2 & =   \pi(Q) \sum_{Q/2 \le p \le Q} \left| \sum_{1 \le t\le T}  \(\frac{t^2 - 4p}{m}\)\right|^2\\
& \le  \pi(Q) \sum_{n \le 4Q} \left| \sum_{1 \le t\le T}  \(\frac{t^2 - n}{m}\)\right|^2\\ 
& =  \pi(Q)    \sum_{1 \le s,t\le T} \, \sum_{n \le 4Q}  \(\frac{(s^2 - n)(t^2 - n)}{m}\) .
\end{split}
\end{equation*}

If $\gcd(s^2-t^2,m) > m^{1/2}$, we estimate the inner sum trivially as $O(Q)$.
The total contribution from such pairs $(s,t)$, is at most 
\begin{equation}
\label{eq:large gcd}
\begin{split}
\sum_{\substack{d\mid m \\ d > m^{1/2}}}
\sum_{\substack{1 \le s,t\le T\\ s^2 \equiv t^2 \pmod d}} 1 &
\le \sum_{\substack{d\mid m \\ d > m^{1/2}}}  T\(T/d+1\) 2^{\omega(d)}\\
& \le  T\(T/m^{1/2}+1\) \tau(m)^2, 
\end{split}
\end{equation}
since for a square-free $d$,  by the Chinese remainder theorem, any 
quadratic congruence of the form $s^2 \equiv a \pmod d$, $1\le s \le d$,
has at most $2^{\omega(d)}$ solutions.

If $\gcd(s^2-t^2,m)\le  m^{1/2}$, we apply Lemma~\ref{lem:Short} to the inner sum, getting
\begin{equation}
\label{eq:small gcd}
\begin{split}
\left| \sum_{n \le 4Q}  \(\frac{(s^2 - n)(t^2 - n)}{m}\)\right| &\le 
16 Q \(\gcd(s^2-t^2,m)   m^{-1}\tau(m)^{r^2+2r}\)^{1/r2^r}\\
& \le  16 Q \(m^{-1/2}\tau(m)^{r^2+2r}\)^{1/r2^r} 
\end{split}
\end{equation}
for any positive integer $r$ with
\begin{equation}
\label{eq:large r}
(4Q)^r > m^3.
\end{equation}

Therefore, combining~\eqref{eq:large gcd} and~\eqref{eq:small gcd}, 
we obtain
\begin{equation}
\begin{split}
\label{eq:prelim}
S(m)^2 \ll  \pi(Q) & QT\(T/m^{1/2}+1\) \tau(m)^2\\
& + \pi(Q) Q T^2 \(m^{-1/2}\tau(m)^{r^2+2r}\)^{1/r2^r} .
\end{split}
\end{equation}

Furthermore, for $m \in \cM$ 
we have
\begin{equation}
\label{eq:tau m}
\tau(m) \le 2^{\pi(L)} = \exp\((\log 2+o(1)) \frac{\log Q \log \log \log Q}{\log \log Q}\) .
\end{equation}
So if 
\begin{equation}
\label{eq:small r}
r^2 +r \le 0.01 \frac{ \log \log  Q}{\log \log \log Q}
\end{equation}
then for $m > T^{1/4}$ we have 
$$
\tau(m)^{r^2+2r}\le Q^{0.01 \log 2 + o(1)} =  T^{0.01 \log 2 + o(1)} \le
m^{0.04 \log 2 + o(1)} \le m^{1/6}, 
$$
provided that $Q$ is large enough.
Hence, 
$$
m^{-1/2}\tau(m)^{r^2+2r} \le m^{-1/3} \le T^{-1/12}.
$$
Furthermore, since~\eqref{eq:tau m} implies that  
$\tau(m)  = T^{o(1)}$ for $m \in \cM$,  
we see that~\eqref{eq:prelim} implies that for  $m > T^{1/4}$, for any 
$r$ satisfying~\eqref{eq:large r} and~\eqref{eq:small r}, we have
$$
S(m) \ll  Q T^{1-1/24r2^r} .
$$
Therefore,
\begin{equation*}
\begin{split}
\sum_{\substack{m\in \cM^*\\ m > T^{1/4}}}  |S(m)| & \ll 2^{\pi(L)} Q T^{1-1/24r2^r}\\
& \le  Q T^{1-1/24r2^r} \exp\((\log 2+o(1)) \frac{\log Q \log \log \log Q}{\log \log Q}\) .
\end{split}
\end{equation*}
In particular, if we set 
$$
r = \fl{ \log \log \log Q}
$$
then 
$$
T^{1/24 r2^r} = \exp\(\frac{\log Q}{(\log \log Q)^{\log 2 + o(1)}}\).
$$
Therefore,
\begin{equation}
\label{eq:large m}
\sum_{\substack{m\in \cM^*\\ m > T^{1/4}}}  |S(m)|  \ll
 Q T^{1-1/25r2^r} =  o(\pi(Q) T).
\end{equation}
It is also obvious that~\eqref{eq:small r} is satisfied for the 
above choice of $r$. 
Furthermore, the 
condition~\eqref{eq:large r} is satisfied as well
because 
$$
(4Q)^r \ge  \exp((1 +o(1)) \log Q \log \log \log Q) 
$$
and
$$
\max_{m \in \cM} m = \exp((1+o(1)) L)
= \exp( (0.3 +o(1)) \log Q \log \log \log Q).
$$

Substituting~\eqref{eq:small m} and~\eqref{eq:large m} in~\eqref{eq:W Sm}, 
we see that~\eqref{eq:W pos} holds, which concludes the proof.


\section*{Acknowledgement}

The author is very grateful to Andrew Sutherland for very useful comments. 

During the preparation of this work the author was supported in part by 
the Australian Research Council grant DP1092835, and Macquarie University 
grant MQRDG1465020.

\end{document}